\makeatletter \@addtoreset{equation}{section}
\newtheorem{thm}{Theorem}[section]
\newtheorem{prop}[thm]{Proposition}
\newtheorem{lem}[thm]{Lemma}
\newtheorem{rem}[thm]{Remark}
\newtheorem{cor}[thm]{Corollary}
\newtheorem{exm}[thm]{Example}
\newcommand{\thmref}[1]{Theorem~{\rm \ref{#1}}}
\newcommand{\lemref}[1]{Lemma~{\rm \ref{#1}}}
\newcommand{\cororef}[1]{Corollary~{\rm \ref{#1}}}
\newcommand{\propref}[1]{Proposition~{\rm \ref{#1}}}
\begin{document}

\title{The Marcinkiewicz--Zygmund-Type Strong Law of Large Numbers with General Normalizing Sequences 
\thanks{The paper was supported by NAFOSTED, Grant No. 101.03-2015.11.}
}

\titlerunning{Marcinkiewicz--Zygmund Strong Law}

\author{Vu T. N. Anh \and Nguyen T. T. Hien \and Le V. Thanh$^*$\thanks{$^*$Corresponding author}          \and Vo T. H. Van
}


\institute{V. T. N. Anh \at
              Department
of Mathematics, Hoa Lu University, Ninh Binh, Vietnam\\
              \email{vtnanh@hluv.edu.vn}         
           \and
           N. T. T. Hien \at
              Department of Mathematics, Vinh University, Nghe An, Vietnam\\
              \email{hienntt.ktoan@vinhuni.edu.vn}
           \and 
           L. V. Thanh \at 
              Department of Mathematics, Vinh University, Nghe An, Vietnam\\
              \email{levt@vinhuni.edu.vn}
           \and
           V. T. H. Van \at 
              Department of Mathematics, Vinh University, Nghe An, Vietnam\\
              \email{vanvth@vinhuni.edu.vn}
           }

\date{}

\maketitle

\begin{abstract}
	\quad

\noindent This paper establishes complete convergence
for weighted sums and the Marcinkiewicz--Zygmund-type strong law of large
numbers for sequences of negatively associated and identically distributed random variables
$\{X,X_n,n\ge1\}$ with general 
normalizing constants under a moment condition that $ER(X)<\infty$,
where $R(\cdot)$ is a regularly varying
function.
The result is new even when the random
variables are independent and identically distributed (i.i.d.), and a special case of this result comes
close to a solution to an open question raised by Chen and Sung (Statist Probab Lett 92:45--52, 2014). 
The proof exploits some properties of slowly
varying functions and the de Bruijin conjugates.
A counterpart of the main result obtained by 
Martikainen (J Math Sci 75(5):1944--1946, 1995) on the
Marcinkiewicz--Zygmund-type strong law of large numbers for pairwise i.i.d. random variables
is also presented. Two illustrated examples are provided, including a strong
law of large numbers for pairwise negatively dependent random variables
which have the same distribution as the random variable appearing in the St. Petersburg game.

\keywords{Weighted sum
\and Negative association \and Negative dependence \and Complete convergence \and Strong law of large numbers \and Normalizing constant \and Slowly varying function
}

 \subclass{60F15}
\end{abstract}

\section{Introduction}\label{sec:int}

The motivation of this paper is an open
question raised recently by Chen and Sung \cite{ChenSung14}.
Let $1<\alpha\le 2$, $\gamma>0$ and let $\{X,X_n,n\ge 1\}$ be a sequence of
negatively associated and identically distributed random variables with $E(X)=0$.
Sung \cite{Sung11} 
proved that if
\begin{equation}
\begin{cases}
E|X|^\gamma<\infty \text{ for }\ \gamma>\alpha,\\
E|X|^\alpha\log(|X|+2) <\infty \text{ for }\ \gamma=\alpha,\\
E|X|^\alpha<\infty \text{ for }\ \gamma<\alpha,
\end{cases}
\end{equation}
then
\begin{equation}\label{ChenSung03}
\sum_{n=1}^\infty 
 n^{-1} P\left(\max_{1\le k\le n}\left|
\sum_{i=1}^{k}a_{ni}X_i\right|>\varepsilon  n^{1/\alpha}\log^{1/\gamma}(n)\right)<\infty
\ \text{ for all } \ \varepsilon >0,
\end{equation}
where
$\{a_{ni},n\ge 1,1\le i\le n\}$
are constants satisfying 
\begin{equation}\label{ChenSung05}
\sup_{n\ge 1}\dfrac{\sum_{i=1}^{n}|a_{ni}|^{\alpha}}{n}<\infty.
\end{equation}
Here and thereafter, $\log$ denotes the logarithm to the base $2$.
Chen and Sung \cite{ChenSung14} proved that for the case where $\gamma>\alpha$, the condition $E|X|^\gamma<\infty$
is optimal. They raised an open question about finding the optimal condition for \eqref{ChenSung03} 
when $\gamma\le \alpha$.
For the case where $\gamma< \alpha$, Chen and Sung \cite[Corollary 2.2]{ChenSung14} proved
that \eqref{ChenSung03} holds under an almost optimal condition that
\[E\left(|X|^\alpha\log^{1-\alpha/\gamma}(|X|+2)\right) <\infty.\]

In this note, by
using some results related to regularly varying functions,
we provide the necessary and sufficient conditions for
\begin{equation}\label{ChenSung07}
\sum_{n}
 n^{-1} P\left(\max_{1\le k\le n}\left|
\sum_{i=1}^{k}a_{ni}X_i\right|>\varepsilon n^{1/\alpha}\tilde{L}(n^{1/\alpha})\right)<\infty
\ \text{ for all } \ \varepsilon >0,
\end{equation}
where $\tilde{L}(\cdot)$ is 
the de Bruijn conjugate of a slowly varying function $L(\cdot)$, defined on $[A,\infty)$
for some $A>0$. 
This result is new even when the random variables are i.i.d. 
By letting $L(x)\equiv \log^{-1/\gamma}(x),\ x\ge 2$, we obtain
optimal moment condition for \eqref{ChenSung03}. 

Weak laws of large numbers with the norming constants are of the form 
$n^{1/\alpha}\tilde{L}(n^{1/\alpha})$ were studied by Gut \cite{Gut04}, and 
Matsumoto and Nakata \cite{MN13}. 
The Marcinkiewicz--Zygmund strong law of large numbers has been extended and generalized in many directions
by a number of authors, see \cite{BaKa,DedeckerMerlevede,GutStadmueller,HechnerHeinkel,Rio95a,Rio95b,Szewczak}
and references therein. To our best knowledge, there is not any result in the literature 
that considers strong law of large numbers with general
normalizing constants $n^{1/\alpha}\tilde{L}(n^{1/\alpha})$ 
except Gut and Stadm\"{u}ller \cite{GutStadmueller}
who studied the Kolmogorov strong law of large numbers, but for delay sums.
The main result of this paper fills this gap.
Recently, Miao et al. \cite{MMX} have studied the Marcinkiewicz--Zygmund-type
strong law of large numbers where the norming constants are of the form  $n^{1/\alpha}\log^{\beta/\alpha} n$
for some $\beta\ge0$, which is a special case of our result.

The concept of negative
association of random variables was introduced by Joag-Dev and Proschan \cite{JoPr}. 
A collection $\{X_1,\dots,X_n\}$ of
random variables is said to be negatively associated if for any
disjoint subsets $A,B$ of $\{1,\dots,n\}$ and any real
coordinatewise nondecreasing functions $f$ on ${\mathbb{R}}^{|A|}$
and $g$ on ${\mathbb{R}}^{|B|}$, 
\begin{equation}\label{neg-var} \text{Cov}(f(X_k,k\in
A),g(X_k,k\in B))\le 0
\end{equation} 
whenever the covariance exists,  where
$|A|$ denotes the cardinality of $A$.
A sequence $\{X_n,n\ge 1\}$ of random variables is said to be
negatively associated if every finite subfamily is negatively
associated.

There is a weaker concept of dependence called 
negative dependence, which was introduced by Lehmann \cite{Lehmann66} 
and further investigated
by Ebrahimi and Ghosh \cite{EG81} and Block et al. \cite{BSS}. 
A collection of
random variables $\{X_1,\dots,X_n\}$ is said to be negatively dependent if for all $x_1,\dots,x_n \in \mathbb{R}$,
$$P(X_1\le x_1, \dots, X_n\le x_n)\le P(X_1\le x_1) \dots P(X_n\le x_n),$$
and
$$P(X_1> x_1, \dots, X_n> x_n)\le P(X_1> x_1) \dots P(X_n> x_n).$$
A sequence of random variables $\{X_i,i\ge 1\}$ is said to be negatively dependent if
for any $n\ge 1$, the collection $\{X_1,\dots,X_n\}$ is negatively dependent.
 A sequence of random variables $\{X_i,i\ge 1\}$ is said to be pairwise negatively dependent if
for all $x,y\in \mathbb{R}$ and for all $i\not=j$,
\[P(X_i\le x,X_j\le y)\le P(X_i\le x) P(X_j\le y).\]
It is well known and easy to prove that 
$\{X_i,i\ge 1\}$ is pairwise negatively dependent if and only if
for all $x,y\in \mathbb{R}$ and for all $i\not=j$,
\begin{equation*}
P(X_i> x,X_j> y)\le P(X_i> x) P(X_j> y).
\end{equation*}

By Joag-Dev and Proschan \cite[Property P3]{JoPr}, negative association implies negative dependence.
For examples about negatively dependent random variables which are not negatively associated, see \cite[p. 289]{JoPr}.
Of course, pairwise independence implies pairwise negative dependence, but pairwise
independence and negative dependence do not imply each other.
Joag-Dev and Proschan \cite{JoPr} pointed out that many useful distributions enjoy the negative association properties 
(and therefore, they are negatively dependent)
including multinomial distribution, multivariate hypergeometric distribution, Dirichlet distribution, 
strongly Rayleigh distribution and
distribution of random sampling without replacement. 
Limit theorems for negatively associated and negatively dependent random variables have received extensive attention.
We refer to \cite{JiLi,Matula92,Shao00} and references therein.
These concepts of dependence can be extended to the Hilbert space-valued random variables; see, e.g.,
\cite{BDD,HTV,KKH,Thanh13}, among others.

The rest of the paper is arranged as follows. Section \ref{Pre}
presents some results on slowly varying functions needed in proving the main results. 
Section \ref{NA} focuses on complete convergence for
weighted sums of negatively associated and identically distributed random variables. 
In Sect. \ref{Pair}, we apply a result concerning slowly varying
functions developed in Sect. \ref{Pre} to give a counterpart of
Martikainen's strong law of large numbers (see \cite{Martikainen}) 
for sequences of pairwise negatively dependent and identically distributed random variables.
As an application, we prove a strong law
of large numbers for pairwise negatively dependent random variables
which have the same distribution as the random variable appearing in the St. Petersburg game.

\section{Some Facts Concerning Slowly Varying Functions}\label{Pre}
Some technical results concerning slowly varying functions will be presented in
this section.

The notion of regularly varying function can be found in \cite[Chapter 1]{Seneta76}.
A real-valued function $R(\cdot )$ is said to be regularly varying with index of regular variation
$\rho$ ($\rho\in\mathbb{R}$) if it is 
a positive and measurable function on $[A,\infty)$ for some $A> 0$, and for each $\lambda>0$,
\begin{equation}\label{rv01}
\lim_{x\to\infty}\dfrac{R(\lambda x)}{R(x)}=\lambda^\rho.
\end{equation}
A regularly varying function with the index of regular variation $\rho=0$ is called slowly varying.
It is well known that a function $R(\cdot )$ is regularly varying
with the index of regular variation $\rho$ if and only if it can be written in the form
\begin{equation}\label{sv01}
R(x)=x^\rho L(x)
\end{equation}
where $L(\cdot)$ is a slowly varying function (see, e.g., \cite[p. 2]{Seneta76}).
On the regularly 
varying functions and their important role in probability, 
we refer to Seneta \cite{Seneta76}, 
Bingham, Goldie and Teugels \cite{BGT}, and more recent
survey paper by Jessen and Mikosch \cite{JeMi}.
Regular variation is also one of the key notions for
modeling the behavior of large telecommunications networks; see, e.g., 
Heath et al. \cite{HRS}, Mikosch et al. \cite{MRRS}. 

The basic result in the theory of slowly varying functions is the representation theorem
(see, e.g., \cite[Theorem 1.3.1]{BGT})
which states that for a positive and measurable function $L(\cdot)$ defined on $[A,\infty)$ for some $A>0$,
$L(\cdot)$ is slowly varying if and only if it can be written in the form 
\[L(x)=c(x)\exp\left(\int_{B}^{x}\dfrac{\varepsilon(u)du}{u}\right)\]
for some $B\ge A$ and for all $x\ge B$, where $c(\cdot)$ is a positive bounded measurable function
defined on $[B,\infty)$ satisfying
$\lim_{x\to\infty}c(x)=c\in (0,\infty)$ and $\varepsilon(\cdot)$ is 
a continuous function defined on $[B,\infty)$ satisfying
$\lim_{x\to\infty}\varepsilon(x)=0$.
Seneta \cite{Seneta73} (see also in \cite[Lemma 1.3.2]{BGT}) proved that if 
$L(\cdot)$ is a slowly varying function defined on $[A,\infty)$ for some $A> 0$,
then there exists $B\ge A$ such that $L(x)$ is bounded on every finite closed interval $[a,b]\subset [B,\infty)$.

Let $L(\cdot)$ be a slowly varying function. Then by \cite[Theorem 1.5.13]{BGT},
there exists a slowly varying function $\tilde{L}(\cdot)$, unique up to asymptotic equivalence, satisfying
\begin{equation}\label{BGT1513}
\lim_{x\to\infty}L(x)\tilde{L}\left(xL(x)\right)=1\ \text{ and } \lim_{x\to\infty}\tilde{L}(x)L\left(x\tilde{L}(x)\right)=1.
\end{equation}
The function $\tilde{L}$ is called the de Bruijn conjugate of $L$, and $\left(L,\tilde{L}\right)$ is called a (slowly varying) conjugate pair (see, e.g., \cite[p. 29]{BGT}).
By \cite[Proposition 1.5.14]{BGT}, if $\left(L,\tilde{L}\right)$ is a conjugate pair, then for $a,b,\alpha>0$, 
each of $\left(L(ax),\tilde{L}(bx)\right)$, $\left(aL(x),a^{-1}\tilde{L}(x)\right),$
$\left(\left(L(x^\alpha)\right)^{1/\alpha},(\tilde{L}(x^\alpha))^{1/\alpha}\right)$
is a conjugate pair.
Bojani\'{c}, R. and Seneta \cite{Bojanic71} (see also Theorem 2.3.3 and Corollary 2.3.4 in \cite{BGT})
proved that if $L(\cdot)$ is a slowly varying function satisfying 
\begin{equation}
\lim_{x\to\infty}\left(\dfrac{L(\lambda_0 x)}{L(x)}-1\right)\log( L(x))=0,
\end{equation}
for some $\lambda_0>1$, then for every $\alpha\in\mathbb{R}$,
\begin{equation}\label{BGT2.3.4}
\lim_{x\to\infty}\dfrac{L(xL^\alpha(x))}{L(x)}=1,
\end{equation}
and therefore, we can choose (up to aymptotic equivalence) $\tilde{L}(x)=1/L(x)$.
In particular, if $L(x)=\log (x)$ then $\tilde{L}(x)=1/\log(x)$.

The following lemma follows from Theorem 1.5.12 and Proposition 1.5.15 in \cite{BGT}.
Here and thereafter, for a slowly varying function $L(\cdot)$ defined on $[A,\infty)$ for some $A>0$,
we denote the Brujin
conjugate of $L(\cdot)$
by $\tilde{L}(\cdot)$. Without loss of generality, we assume that
$\tilde{L}(\cdot)$ is also defined  on $[A,\infty)$, and that $L(x)$ and $\tilde{L}(x)$ are both bounded 
on finite closed intervals.

\begin{lem}\label{lemma00sv}
Let $\alpha,\beta>0$ and $L(\cdot)$ be a slowly varying function. Let $f(x)=x^{\alpha \beta}L^{\alpha}(x^\beta)$ and
$g(x)=x^{1/{(\alpha \beta)}}\tilde{L}^{1/\beta}(x^{1/\alpha})$. Then
\begin{equation}
\lim_{x\to\infty}\dfrac{f(g(x))}{x}=\lim_{x\to\infty}\dfrac{g(f(x))}{x}=1.
\end{equation}
\end{lem}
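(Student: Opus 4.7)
The plan is to verify both limits by direct substitution, reducing each composition to the defining limit of the de Bruijn conjugate, namely
\[
\lim_{y\to\infty} L(y)\,\tilde{L}\bigl(y L(y)\bigr) = 1
\qquad\text{and}\qquad
\lim_{y\to\infty} \tilde{L}(y)\,L\bigl(y\tilde{L}(y)\bigr) = 1,
\]
which are displayed in \eqref{BGT1513}. The idea is that the exponents $\alpha\beta$ and $1/(\alpha\beta)$ are chosen precisely so that the power-of-$x$ factors collapse to $x^1$, and the remaining slowly varying factor is exactly the de Bruijn conjugate product raised to a fixed power.

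For the first limit, I would substitute $g(x) = x^{1/(\alpha\beta)}\tilde{L}^{1/\beta}(x^{1/\alpha})$ into $f(t)=t^{\alpha\beta}L^\alpha(t^\beta)$. The $x^{\alpha\beta\cdot 1/(\alpha\beta)} = x$ factor appears immediately, and the argument of $L$ becomes $(g(x))^\beta = x^{1/\alpha}\tilde{L}(x^{1/\alpha})$. After pulling the outer power $\alpha$ out, one obtains
\[
\frac{f(g(x))}{x} = \Bigl[\tilde{L}(x^{1/\alpha})\,L\bigl(x^{1/\alpha}\tilde{L}(x^{1/\alpha})\bigr)\Bigr]^{\alpha}.
\]
Setting $y = x^{1/\alpha}$ and invoking the second half of \eqref{BGT1513} gives that the bracket tends to $1$, hence so does its $\alpha$-th power.

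The second limit is treated symmetrically. Substituting $f(x)=x^{\alpha\beta}L^\alpha(x^\beta)$ into $g(t)=t^{1/(\alpha\beta)}\tilde{L}^{1/\beta}(t^{1/\alpha})$ again produces an overall factor of $x$, and the argument of $\tilde{L}$ simplifies to $x^{\beta}L(x^\beta)$. After collecting the $1/\beta$ power one gets
\[
\frac{g(f(x))}{x} = \Bigl[L(x^\beta)\,\tilde{L}\bigl(x^\beta L(x^\beta)\bigr)\Bigr]^{1/\beta},
\]
and with $y = x^\beta$ the first half of \eqref{BGT1513} finishes the job.

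There is no real obstacle here beyond bookkeeping the exponents correctly; the lemma is essentially the statement that $f$ and $g$ are asymptotic inverses, which is precisely what the de Bruijn conjugation encodes (and matches the closure properties of conjugate pairs under the operations listed after \eqref{BGT1513}). The only point requiring attention is domain: one should note that, by the standing assumption that $L$ and $\tilde L$ are bounded on compact subintervals of $[A,\infty)$, the compositions $f(g(x))$ and $g(f(x))$ are well-defined for all sufficiently large $x$, so the limits make sense.
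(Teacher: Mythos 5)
Your proof is correct: both substitutions are computed accurately, the exponents collapse as you claim, and each composition reduces exactly to one half of the de Bruijn conjugate relation \eqref{BGT1513} after the change of variable $y=x^{1/\alpha}$ (resp.\ $y=x^\beta$), with the outer fixed power handled by continuity of $t\mapsto t^{\alpha}$ (resp.\ $t\mapsto t^{1/\beta}$) at $t=1$. The paper itself does not write out a proof; it simply asserts that the lemma follows from Theorem 1.5.12 and Proposition 1.5.15 of Bingham--Goldie--Teugels, i.e.\ from the general theory of asymptotic inverses of regularly varying functions. Your route is therefore genuinely more self-contained: instead of appealing to the existence and form of an asymptotic inverse of $x^{\alpha\beta}L^{\alpha}(x^{\beta})$, you verify directly that the explicitly given $g$ works, which makes the exponent bookkeeping transparent and uses only the defining property of the conjugate pair $\left(L,\tilde{L}\right)$. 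What the citation-based argument buys is brevity and the reassurance that $g$ is essentially unique up to asymptotic equivalence; what your computation buys is a complete, elementary verification that a reader can check line by line. Your closing remark about well-definedness for large $x$ (via boundedness of $L$ and $\tilde{L}$ on finite closed intervals) is a sensible addition consistent with the paper's standing conventions.
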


The second lemma shows that we can approximate a slowly varying function  $L(\cdot)$
by a differentiable slowly varying function $L_1(\cdot)$. See Galambos and Seneta \cite[p. 111]{GS73} for a proof.

\begin{lem}\label{lem:GS}
	For any
	slowly varying function  $L(\cdot)$ defined on $[A,\infty)$ for some $A> 0$, there exists
	a differentiable slowly varying function $L_1(\cdot)$ defined on $[B,\infty)$ for some $B\ge A$
	such that 
	\begin{equation*}
	\lim_{x\to\infty}\dfrac{L(x)}{L_{1}(x)}=1\ \text{ and }\ \lim_{x\to\infty}\dfrac{xL_{1}'(x)}{L_{1}(x)}=0.
	\end{equation*}
	
	Conversely, if  $L(\cdot)$ is a positive differentiable function satisfying  
	\begin{equation}\label{sv002}
	\lim_{x\to\infty}\dfrac{xL'(x)}{L(x)}=0,
	\end{equation}
	then $L(\cdot)$ is a slowly varying function. 
\end{lem}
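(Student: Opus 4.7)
The plan is to lean on the Karamata representation theorem quoted just above the lemma, which writes any slowly varying $L(\cdot)$ on $[B,\infty)$ as $L(x) = c(x)\exp\bigl(\int_B^x \varepsilon(u)/u\,du\bigr)$, where $c(x) \to c \in (0,\infty)$ is only bounded measurable while $\varepsilon$ is continuous with $\varepsilon(x) \to 0$. The obstruction to differentiability lies entirely in the factor $c(x)$, so my strategy is to replace it by its limit $c$.

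For the first assertion, I would set
\[
L_1(x) := c\exp\left(\int_B^x \dfrac{\varepsilon(u)}{u}\,du\right).
\]
Continuity of $\varepsilon$ together with the fundamental theorem of calculus makes $L_1$ differentiable on $[B,\infty)$, and $L_1$ is itself slowly varying since it has exactly the Karamata form (with constant multiplicative prefactor). The ratio $L(x)/L_1(x) = c(x)/c$ tends to $1$, which is the first required limit. Direct differentiation gives $L_1'(x) = L_1(x)\varepsilon(x)/x$, whence $xL_1'(x)/L_1(x) = \varepsilon(x) \to 0$, the second required limit.

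For the converse, suppose $L(\cdot)$ is positive and differentiable on $[B,\infty)$ with $xL'(x)/L(x) \to 0$. Define $\varepsilon(x) := xL'(x)/L(x)$, a continuous function tending to $0$. Rearranging gives $L'(x)/L(x) = \varepsilon(x)/x$, and integrating from $B$ to $x$ yields $\log L(x) - \log L(B) = \int_B^x \varepsilon(u)/u\,du$. Exponentiating puts $L$ in the Karamata representation form with constant prefactor $L(B)$, so $L$ is slowly varying. Alternatively, one may verify slow variation directly: for each fixed $\lambda > 0$,
\[
\log\dfrac{L(\lambda x)}{L(x)} = \int_x^{\lambda x}\dfrac{\varepsilon(u)}{u}\,du,
\]
and the right-hand side is bounded in absolute value by $\bigl(\sup_{u\geq x}|\varepsilon(u)|\bigr)\log\lambda$, which tends to $0$.

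The main step is conceptual rather than computational: one must resist differentiating $L$ itself and instead build the smooth replacement from the Karamata \emph{exponent data} $\varepsilon(\cdot)$, which is already continuous by the hypotheses of the representation theorem. Once the replacement is chosen, all verifications reduce to elementary calculus, and no auxiliary result beyond the representation theorem already stated in the excerpt is needed.
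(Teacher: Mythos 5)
Your proof is correct. The paper does not prove this lemma itself --- it simply cites Galambos and Seneta \cite{GS73} --- and your argument is the standard one that the cited source (and \cite[Section 1.3]{BGT}) is based on: discard the measurable prefactor $c(x)$ from the Karamata representation and keep the exponential of the integral of the continuous function $\varepsilon(u)/u$, which is differentiable by the fundamental theorem of calculus, and run the representation in reverse for the converse. One small overstatement: in the converse you call $\varepsilon(x)=xL'(x)/L(x)$ ``continuous,'' but differentiability of $L$ alone does not give continuity of $L'$; this is harmless, since your direct verification via $\log\bigl(L(\lambda x)/L(x)\bigr)=\int_x^{\lambda x}\varepsilon(u)/u\,du$ only uses that $\varepsilon$ is bounded on $[x,\infty)$ for large $x$ (which follows from $\varepsilon(x)\to0$ and makes $\log L$ locally Lipschitz there, so the fundamental theorem of calculus applies).
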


Because of \lemref{lem:GS}, we can work with differentiable slowly varying functions $L(\cdot)$ that satisfy \eqref{sv002}
in our setting.

The proof of \lemref{lemma01sv} (i)  follows from direct calculations (by taking the derivative).
\lemref{lemma01sv} (ii) 
is an easy consequence of the representation theorem stated above.

\begin{lem}\label{lemma01sv} Let $p>0$ and let $L(\cdot)$ be a slowly varying function defined on $[A,\infty)$ for some $A> 0$,
satisfying \eqref{sv002}.
Then the following statements hold.
\begin{description}
\item{(i)} There exists $B\ge A$ such that $x^pL(x)$ is increasing on $[B,\infty)$, $x^{-p}L(x)$ is decreasing on $[B,\infty)$, and $\lim_{x\to\infty}x^pL(x)=\infty,\ \lim_{x\to\infty}x^{-p}L(x)= 0$.
\item{(ii)} For all $\lambda>0$,
\[\lim_{x\to\infty}\dfrac{L(x)}{L(x+\lambda)}=1.\]
\end{description}
\end{lem}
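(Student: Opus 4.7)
The plan is to handle the two parts essentially as the author advertises: part (i) by differentiation using \eqref{sv002}, and part (ii) by plugging into the representation theorem.

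For part (i), I would start by computing the derivative of $h(x) = x^p L(x)$, obtaining
\[
h'(x) = p x^{p-1} L(x) + x^p L'(x) = x^{p-1} L(x) \Bigl( p + \frac{xL'(x)}{L(x)} \Bigr).
\]
Since $L(x)>0$ (eventually) and $xL'(x)/L(x)\to 0$ by \eqref{sv002}, the bracket exceeds $p/2>0$ for all $x$ beyond some threshold $B$, so $h$ is strictly increasing on $[B,\infty)$. An identical computation with $-p$ in place of $p$ gives the bracket eventually below $-p/2<0$, so $x^{-p}L(x)$ is strictly decreasing on $[B,\infty)$ (after enlarging $B$ if necessary). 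To upgrade monotonicity to the stated limits, I would apply the monotonicity conclusion just obtained with the exponent $p/2$: since $x^{p/2}L(x)$ is eventually increasing, it is bounded below by some positive constant $c_0$ for large $x$, and therefore
\[
x^p L(x) = x^{p/2}\cdot x^{p/2} L(x) \ge c_0\, x^{p/2}\to\infty.
\]
The symmetric argument, applied to $x^{-p/2}L(x)$, bounds this quantity above by a constant, so $x^{-p}L(x) \le c_1 x^{-p/2}\to 0$.

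For part (ii), I would invoke the Karamata representation recalled earlier in the section: there exist $B\ge A$, a positive measurable $c(\cdot)$ with $c(x)\to c\in(0,\infty)$, and a continuous $\varepsilon(\cdot)$ with $\varepsilon(x)\to 0$ such that $L(x)=c(x)\exp\bigl(\int_B^x \varepsilon(u)/u\, du\bigr)$ for all $x\ge B$. Then for $x$ large enough,
\[
\frac{L(x+\lambda)}{L(x)} = \frac{c(x+\lambda)}{c(x)}\exp\!\left(\int_x^{x+\lambda}\frac{\varepsilon(u)}{u}\,du\right).
\]
The prefactor tends to $c/c=1$, and since $|\varepsilon(u)|\le \sup_{u\ge x}|\varepsilon(u)|\to 0$, the integral is bounded in absolute value by $\lambda\sup_{u\ge x}|\varepsilon(u)|/x\to 0$, so the exponential also tends to $1$; taking reciprocals gives the stated limit.

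Neither part presents a genuine obstacle: (i) is just a sign analysis of the logarithmic derivative, and (ii) reduces to the fact that $c(\cdot)$ has a positive limit while a shift by a fixed $\lambda$ makes a vanishing contribution to the integral in the representation. The only minor care needed is in (i), where one must iterate the monotonicity argument at exponent $p/2$ to promote boundedness into divergence (resp.\ convergence to zero), and in (ii), to ensure $x$ is chosen so large that both $x$ and $x+\lambda$ lie in the domain of the representation.
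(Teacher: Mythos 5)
Your proof is correct and follows exactly the route the paper indicates: part (i) by a sign analysis of the derivative using \eqref{sv002}, and part (ii) by the Karamata representation theorem, with the extra care you take (iterating the monotonicity at exponent $p/2$ to get the limits, and bounding the shifted integral by $\lambda\sup_{u\ge x}|\varepsilon(u)|/x$) filling in the details the paper leaves to the reader. No discrepancies to report.
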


\begin{rem}\label{R11}
	{\rm
If we do not have the assumption that $L(\cdot)$ satisfies \eqref{sv002}, then we still have 
$x^pL(x)\rightarrow\infty,\ x^{-p}L(x)\rightarrow 0$ as $x\to\infty$ (see Seneta \cite[p. 18]{Seneta76}), but we do not
	have the monotonicity as in Lemma \ref{lemma01sv} (i).
 }
\end{rem}

The following lemma is a direct consequence of Karamata's theorem (see \cite[Theorem 1.5.10]{BGT}) as 
was so kindly pointed out to us by the referee.

\begin{lem}\label{sv51}
Let $p>1$, $q\in\mathbb{R}$ and  $L(\cdot)$ be a differentiable slowly varying function defined on $[A,\infty)$ for some $A>0$.
Then 
\begin{equation}\label{rem08}
\begin{split}
\sum_{k=n}^\infty \dfrac{L^q(k)}{k^p}\sim  \dfrac{L^q(n)}{(p-1)n^{p-1}}.
\end{split}
\end{equation} 
\end{lem}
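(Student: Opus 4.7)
The plan is to reduce the claim to Karamata's theorem for integrals and then pass from the integral to the sum by a standard integral comparison. Since $L$ is slowly varying and $q\in\mathbb{R}$, so is $L^q$: indeed,
\[\lim_{x\to\infty}\frac{L^q(\lambda x)}{L^q(x)}=\Bigl(\lim_{x\to\infty}\frac{L(\lambda x)}{L(x)}\Bigr)^q=1^q=1.\]
Karamata's theorem \cite[Theorem 1.5.10]{BGT} applied to the slowly varying function $L^q$ and to the exponent $p>1$ then yields
\[\int_n^\infty \frac{L^q(t)}{t^p}\,dt \sim \frac{L^q(n)}{(p-1)n^{p-1}}\quad\text{as }n\to\infty,\]
which is the continuous analogue of \eqref{rem08}.

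Next I would show that $t\mapsto L^q(t)/t^p$ is eventually monotone decreasing, so that the tail sum and the tail integral are asymptotically comparable. Because $L$ is differentiable, the converse direction of \lemref{lem:GS} gives $tL'(t)/L(t)\to 0$, and therefore
\[\frac{d}{dt}\!\left(\frac{L^q(t)}{t^p}\right)=\frac{L^q(t)}{t^{p+1}}\left(\frac{qtL'(t)}{L(t)}-p\right),\]
in which the bracketed factor tends to $-p<0$. Hence the integrand is strictly decreasing for all large $t$, and the standard Maclaurin--Cauchy integral test yields the sandwich
\[\int_n^\infty \frac{L^q(t)}{t^p}\,dt \;\le\; \sum_{k=n}^\infty \frac{L^q(k)}{k^p} \;\le\; \frac{L^q(n)}{n^p}+\int_n^\infty \frac{L^q(t)}{t^p}\,dt.\]
Because $p>1$, the boundary term $L^q(n)/n^p$ is of strictly smaller order than $L^q(n)/((p-1)n^{p-1})$, so both bounds in the sandwich are asymptotic to the right-hand side of \eqref{rem08}, and the lemma follows.

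I do not anticipate a genuine obstacle. The only subtlety is justifying the eventual monotonicity of $L^q(t)/t^p$ needed to invoke the integral test; this is precisely where the differentiability hypothesis on $L$ enters via $tL'(t)/L(t)\to 0$. Were $L$ merely assumed slowly varying, one would first use \lemref{lem:GS} to replace $L$ by an asymptotically equivalent differentiable slowly varying function before running the above argument, and the conclusion would be unaffected.
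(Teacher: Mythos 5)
Your overall route is the paper's own: the paper gives no written proof of this lemma, saying only that it is ``a direct consequence of Karamata's theorem (\cite[Theorem 1.5.10]{BGT})'', and your write-up supplies the standard details --- Karamata's theorem for the tail integral of the regularly varying function $L^q(t)t^{-p}$, followed by an integral test to pass from the integral to the sum, with the boundary term $L^q(n)/n^p$ absorbed because $p>1$. That skeleton is correct.

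One step is stated backwards, however. You claim that ``because $L$ is differentiable, the converse direction of \lemref{lem:GS} gives $tL'(t)/L(t)\to 0$.'' The converse direction of \lemref{lem:GS} is the implication $xL'(x)/L(x)\to 0\ \Rightarrow\ L$ slowly varying; it does not assert that every differentiable slowly varying function satisfies \eqref{sv002}, and that assertion is false in general (for instance $L(x)=1+x^{-1}\sin(x^2)$ is differentiable and slowly varying since $L(x)\to 1$, yet $xL'(x)/L(x)\approx 2x\cos(x^2)$ is unbounded). So differentiability alone does not deliver the eventual monotonicity of $L^q(t)/t^p$ that your integral test needs. The repair is exactly the replacement you mention in your final sentence, but it must be invoked even when $L$ is already differentiable: substitute the function $L_1$ of \lemref{lem:GS} satisfying \eqref{sv002} (the paper adopts precisely this convention in the sentence following \lemref{lem:GS}), run your monotonicity and integral-test argument for $L_1$, and then transfer the conclusion back to $L$ using $L^q(k)/L_1^q(k)\to 1$ together with the positivity of all terms, which makes the tail sums and the right-hand sides asymptotically equivalent. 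With that correction the proof is complete.
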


The following proposition gives a criterion for $E\left(|X|^\alpha L^\alpha(|X|+A)\right)<\infty$.

\begin{prop}\label{lemma_bound01}
Let $\alpha\ge 1$, and let $X$ be a random variable.
Let $L(\cdot)$ be a slowly varying function defined on $[A,\infty)$ for some $A>0$. Assume that 
$x^{\alpha}L^\alpha(x)$ and $x^{1/\alpha}\tilde{L}(x^{1/\alpha})$
are increasing on $[A,\infty)$.
Then
\begin{equation}\label{lem_com020}
E\left(|X|^\alpha L^\alpha(|X|+A)\right)<\infty\ \text{ if and only if }\ \sum_{n\ge A^\alpha}P(|X|>b_n)<\infty,
\end{equation}
where $b_n=n^{1/\alpha}\tilde{L}\left(n^{1/\alpha}\right)$, $n\ge A^\alpha$.
\end{prop}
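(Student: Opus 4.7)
The plan is to convert the tail-probability series into an expectation via Fubini, identify the integrand with a counting function, and then use the asymptotic inverse relation of Lemma 2.1 to identify this counting function asymptotically with $f(x):=x^\alpha L^\alpha(x)$; a final comparison via Lemma 2.3(ii) matches $f(|X|)$ with $|X|^\alpha L^\alpha(|X|+A)$ for large $|X|$.

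The first step is standard. By Fubini, $\sum_{n\ge A^\alpha} P(|X|>b_n)=E[N(|X|)]$, where $N(x):=\#\{n\ge A^\alpha: b_n<x\}$; the hypothesis that $b_n$ is increasing makes $N(x)$ the largest such $n$ (or $0$ if none exists). The crux is to show $N(x)\sim f(x)$ as $x\to\infty$. Applying Lemma 2.1 with $\beta=1$ (so that $f(x)=x^\alpha L^\alpha(x)$ and $g(n)=b_n$) gives $f(g(n))/n\to 1$, and in particular $b_n\to\infty$. For $x$ sufficiently large, $b_{N(x)}$ and $b_{N(x)+1}$ exceed $A$, and applying the (hypothesized) increasing function $f$ to $b_{N(x)}<x\le b_{N(x)+1}$ yields $f(b_{N(x)})\le f(x)\le f(b_{N(x)+1})$. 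Since $f(b_n)\sim n$, this forces $N(x)/f(x)\to 1$.

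From $N(x)\sim f(x)$ one extracts constants $0<c_1<c_2$ and $M>A$ such that $c_1 f(x)\le N(x)\le c_2 f(x)$ for all $x\ge M$. The contributions from $\{|X|<M\}$ to both $E[N(|X|)]$ and $E[f(|X|)I(|X|\ge A)]$ are bounded (since $L$ is bounded on $[A,M]$ and $N(M)$ is a finite integer), so $E[N(|X|)]<\infty$ if and only if $E[f(|X|)I(|X|\ge M)]<\infty$. Lemma 2.3(ii) then gives $L(|X|+A)/L(|X|)\to 1$ as $|X|\to\infty$, so $|X|^\alpha L^\alpha(|X|)$ and $|X|^\alpha L^\alpha(|X|+A)$ differ by a bounded factor for $|X|\ge M$, while for $|X|<M$ the quantity $|X|^\alpha L^\alpha(|X|+A)$ is itself bounded (again by boundedness of $L$ on compacts). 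Putting these together establishes the stated equivalence in both directions.

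The main obstacle is the asymptotic identification $N(x)\sim f(x)$, which rests on the asymptotic inverse identity from Lemma 2.1 together with the monotonicity of $f$ and $g$ that is built into the hypotheses of the proposition (so that both $N(x)$ and its approximant can be controlled via sandwich estimates). Once this is in place, everything else is a routine comparison argument, handling the shift $+A$ inside $L$ via Lemma 2.3(ii) and discarding bounded contributions near zero.
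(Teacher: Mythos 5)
Your proof is correct and follows essentially the same route as the paper's: both rest on Lemma \ref{lemma00sv} with $\beta=1$ (so that $f(x)=x^{\alpha}L^{\alpha}(x)$ and $g$ are asymptotic inverses) together with the assumed monotonicity to sandwich one quantity between values of the other, and both dispose of the shift $+A$ and the contribution near the origin by boundedness of $L$ on compacts. The only difference is direction and explicitness — the paper converts the expectation into the series $\sum_{n}P\left(f(|X|+A)>n\right)$ and then inverts, while you convert the series into $E[N(|X|)]$ and show $N(x)\sim f(x)$, spelling out the sandwich estimates that the paper leaves implicit.
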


\begin{proof}
Let $f(x)=x^\alpha L^\alpha(x)$, $g(x)=x^{1/\alpha}\tilde{L}(x^{1/\alpha})$.
Since $L(\cdot)$ is positive and bounded on finite closed intervals, 
\[E\left(|X|^\alpha L^\alpha(|X|+A)\right)<\infty \text{ if and only if } E\left(f(|X|+A)\right)<\infty.\]
For a non negative random variable $Y$, $EY<\infty$ if and only if $\sum_{n=1}^\infty P(Y>n)<\infty$.
Applying this, we have that 
$E\left(f(|X|+A)\right)<\infty$ if and only if
\begin{equation}\label{r01}
\sum_{n=1}^\infty P\left(f(|X|+A)>n\right)<\infty.
\end{equation}
By using \lemref{lemma00sv} with $\beta=1,$ we have $f(g(x))\sim g(f(x)) \sim x$ as $x\to\infty$.
Combining this with the assumption that 
$f(x)$ and $g(x)$
are increasing on $[A,\infty)$, we see that \eqref{r01} is equivalent to
\begin{equation}\label{r02}
\sum_{n\ge A^\alpha} P\left(|X|>n^{1/\alpha}\tilde{L}(n^{1/\alpha})\right)<\infty.
\end{equation}
The proof of the proposition is completed.
\end{proof}

\section{Complete Convergence for Weighted Sums of Negatively Associated and Identically Distributed Random Variables}\label{NA}

In the following theorem, we establish
complete convergence for weighted sums of negatively associated and identically distributed random
variables.
\thmref{Theorem3.1}  is new even when
the random variables are i.i.d. 
A special case of this result comes close
to a solution of an open question of Chen and Sung \cite{ChenSung14}.
In subsequent derivations, the symbol $C$
denotes a generic positive constant whose value may be different for
each appearance.

\begin{thm}\label{Theorem3.1}
Let $1\le \alpha<2$,
$\{X,X_n, \, n \geq 1\}$ be a sequence of negatively associated and identically distributed random variables and 
 $L(\cdot)$ a slowly varying function defined on $[A,\infty)$ for some $A>0$.
When $\alpha=1$, we assume further that $L(x)\ge 1$ and is increasing on $[A,\infty)$.
Let $b_n=n^{1/\alpha}{\tilde{L}}(n^{1/\alpha})$, $n\ge A^\alpha $. Then the 
following four statements are equivalent.
\begin{description}

\item(i) The random variable $X$ satisfies
\begin{equation}\label{add007}
E(X)=0,\ E\left(|X|^\alpha L^\alpha(|X|+A)\right)<\infty.
\end{equation}
\item(ii) For every array of constants $\{a_{ni},n\ge 1,1\le i\le n\}$
satisfying 
\begin{equation}\label{sv001}
\sum_{i=1}^{n}a_{ni}^{2}\le Cn,\ n\ge 1,
\end{equation} 
we have
\begin{equation}\label{sv003}
\sum_{n\ge A^\alpha}
n^{-1}P\left(\max_{1\le k\le n}\left|
\sum_{i=1}^{k}a_{ni}X_i\right|>\varepsilon  b_n\right)<\infty \text{ for all }\varepsilon >0.
\end{equation}
\item(iii) 
\begin{equation}\label{sv003-01}
\sum_{n\ge A^\alpha}
n^{-1}P\left(\max_{1\le k\le n}\left|
\sum_{i=1}^{k}X_i\right|>\varepsilon  b_n\right)<\infty  \text{ for all }\varepsilon >0.
\end{equation}

\item(iv) The strong law of large numbers
\begin{equation}\label{add014}
\begin{split}
\lim_{n\to\infty}\dfrac{
\max_{1\le k\le n}\left|\sum_{i=1}^{k}X_i\right|}{b_n}=0\ \text{ a.s.}
\end{split}
\end{equation}
holds.
\end{description}
\end{thm}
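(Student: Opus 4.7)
The plan is to close the cycle (i) $\Rightarrow$ (ii) $\Rightarrow$ (iii) $\Rightarrow$ (iv) $\Rightarrow$ (i). The implication (ii) $\Rightarrow$ (iii) is immediate by taking $a_{ni}\equiv 1$, which trivially satisfies (3.2). For (iii) $\Rightarrow$ (iv), I would use a dyadic blocking argument: Borel--Cantelli applied to (3.4) along $n_k=2^k$ gives $\max_{1\le j\le n_k}\bigl|\sum_{i=1}^{j}X_i\bigr|/b_{n_k}\to 0$ a.s., and monotonicity of the running maximum combined with the regular variation of $b_n$ (using \lemref{lemma01sv} to bound $b_{n_{k+1}}/b_{n_k}$) interpolates this to all $n$.

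For (iv) $\Rightarrow$ (i), observe first that $X_n/b_n\to 0$ a.s. Since the $X_n$ are NA and identically distributed, a pairwise second Borel--Cantelli argument (valid under pairwise negative quadrant dependence, which NA implies) yields $\sum_n P(|X|>\varepsilon b_n)<\infty$ for every $\varepsilon>0$. Invoking \propref{lemma_bound01}, after first using \lemref{lem:GS} and \lemref{lemma01sv} to replace $L$ by an asymptotically equivalent differentiable slowly varying function securing the required monotonicity of $x^\alpha L^\alpha(x)$ and $x^{1/\alpha}\tilde L(x^{1/\alpha})$, produces $E(|X|^\alpha L^\alpha(|X|+A))<\infty$. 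To deduce $E(X)=0$: when $\alpha>1$, the representation theorem forces $b_n/n\to 0$, so (iv) combined with a pairwise-independent style SLLN for NA sequences forces $E(X)=0$. The case $\alpha=1$ is precisely why the extra hypothesis $L\ge 1$ is imposed: it guarantees $b_n\ge n$, so (iv) is stronger than the Kolmogorov SLLN and the same conclusion follows.

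The main work is (i) $\Rightarrow$ (ii). I would truncate at the scale $b_n$ by setting $Y_{ni}=X_i\mathbf{1}(|X_i|\le b_n)$ and split
\[
P\Bigl(\max_{1\le k\le n}\Bigl|\sum_{i=1}^{k}a_{ni}X_i\Bigr|>\varepsilon b_n\Bigr) \le \sum_{i=1}^{n}P(|X_i|>b_n)+P\Bigl(\max_{1\le k\le n}\Bigl|\sum_{i=1}^{k}a_{ni}Y_{ni}\Bigr|>\varepsilon b_n\Bigr).
\]
The first term, weighted by $n^{-1}$ and summed in $n$, is finite by \propref{lemma_bound01}. For the second, one centers: $a_{ni}Y_{ni}=a_{ni}(Y_{ni}-EY_{ni})+a_{ni}EY_{ni}$. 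Using $E(X)=0$ we rewrite $EY_{ni}=-E(X\mathbf{1}(|X|>b_n))$; then Cauchy--Schwarz together with $\sum a_{ni}^2\le Cn$ and a Karamata tail-integration estimate via \lemref{sv51} shows that $\sum_{i=1}^{n} a_{ni}EY_{ni}=o(b_n)$, so the deterministic drift does not interfere with the threshold. The centered random part is then handled by a H\'ajek--R\'enyi type maximal inequality for NA sequences, which reduces matters to bounding $\sum_{i=1}^{n} a_{ni}^2 EY_{ni}^2$ plus a higher-moment Rosenthal contribution; both are again controlled by \lemref{sv51} applied to the moment condition.

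The hardest step is the fine balancing of the truncation level $b_n=n^{1/\alpha}\tilde L(n^{1/\alpha})$ against the moment $E(|X|^\alpha L^\alpha(|X|+A))$. Each estimate requires switching freely between $L$ and its de Bruijn conjugate $\tilde L$ via the asymptotic identity $f(g(x))\sim x$ of \lemref{lemma00sv}, so that a tail integral in terms of $L$ can be matched against the sum of $P(|X|>b_n)$. The boundary case $\alpha=1$ demands special care: the bias $\sum_i a_{ni}EY_{ni}$ cannot be absorbed by scaling alone, and one must exploit the hypothesis that $L$ is increasing and $\ge 1$ to dominate $E|X|\mathbf{1}(|X|>b_n)$ directly in terms of $b_n$ times a summable factor.
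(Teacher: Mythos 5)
Your outline follows essentially the same route as the paper: the same cycle (i)$\Rightarrow$(ii)$\Rightarrow$(iii)$\Rightarrow$(iv)$\Rightarrow$(i), truncation at level $b_n$ plus centering, control of the drift via $E(X)=0$ and the de Bruijn identities, an $L^2$ maximal inequality for NA sequences combined with \lemref{sv51}, Sung-style dyadic interpolation for (iii)$\Rightarrow$(iv), and the generalized (pairwise negatively correlated) Borel--Cantelli lemma plus \propref{lemma_bound01} for (iv)$\Rightarrow$(i). Two cosmetic remarks: Shao's Kolmogorov-type second-moment maximal inequality suffices, so the ``higher-moment Rosenthal contribution'' you invoke is not needed (this is exactly where $\alpha<2$ is used); and the drift term is controlled not by \lemref{sv51} but by the pointwise conjugacy estimate $n/b_n\le Cb_n^{\alpha-1}L^\alpha(b_n)\le C|X|^{\alpha-1}L^\alpha(|X|)$ on $(|X|>b_n)$, which is where the monotonicity of $x^{\alpha-1}L^{\alpha}(x)$ (and, when $\alpha=1$, the hypothesis that $L$ is increasing) actually enters.

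There is, however, one concrete error in the $\alpha=1$ branch of (iv)$\Rightarrow$(i). You claim that $L\ge1$ ``guarantees $b_n\ge n$, so (iv) is stronger than the Kolmogorov SLLN.'' The inequality goes the other way: from $\tilde L(x)L(x\tilde L(x))\to1$ and $L\ge1$ one gets $\tilde L(n)\le 1+o(1)$, hence $b_n=n\tilde L(n)\le (1+o(1))\,n$. It is precisely this \emph{upper} bound that makes (iv) imply $n^{-1}\sum_{i=1}^nX_i\to0$ a.s.\ and hence, via the SLLN for NA identically distributed variables with $E|X|<\infty$, that $E(X)=0$. If $b_n\ge n$ were the relevant fact, (iv) would be \emph{weaker} than the $n$-normalized SLLN and your deduction of $E(X)=0$ would not go through. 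The paper records the correct fact as $n/b_n=1/\tilde L(n)\sim L(n\tilde L(n))\ge1$; with that correction your argument closes.
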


\begin{proof}
For simplicity, we assume that $A^\alpha$ is an integer number since we can take $[A^\alpha]+1$ otherwise. 
By Lemmas \ref{lem:GS} and \ref{lemma01sv}, without loss of generality, we can assume that 
$x^{1/\alpha}\tilde{L}(x^{1/\alpha})$ and $x^{\alpha-1}L^\alpha(x)$ are increasing on $[A,\infty)$ 
and that $x^{\alpha-2}L^\alpha(x)$ is decreasing on $[A,\infty)$. We may also assume that $a_{ni}\ge 0$
since we can using the identity $a_{ni}=a_{ni}^+-a_{ni}^-$ in the general case.

Firstly, we prove the implication ((i) $\Rightarrow$ (ii)). Assume that \eqref{add007} holds and 
$\{a_{ni},n\ge 1,i\ge 1\}$ are constants satisfying \eqref{sv001}, we will prove that \eqref{sv003} holds. For $n\ge A^\alpha$, set
$$X_{ni}=-b_n I(X_i<-b_n)+X_iI(|X_i|\le b_n)+b_n I(X_i>b_n),\ 1\le i\le n,$$
and
$$S_{nk}=\sum_{i=1}^k \Big(a_{ni}X_{ni}-
E(a_{ni}X_{ni})\Big),\ 1\le k\le n.$$
Let $\varepsilon>0$ be arbitrary. For $n\ge A^\alpha$,
\begin{equation}\label{sv005}
\begin{split}
&P\left(\max_{1\le k\le n}\left|\sum_{i=1}^k
a_{ni} X_{i}\right|>\varepsilon  b_n\right)\le P\left(\max_{1\le k\le n}|X_k|>b_n\right)
+P\left(\max_{1\le k\le n}\left|\sum_{i=1}^k a_{ni}
X_{ni}\right|>\varepsilon  b_n\right)\\
&\le P\Big(\max_{1\le k\le n}|X_k|>b_n\Big)+P\Big(\max_{1\le k\le n}|S_{nk}|>\varepsilon
b_n- 
\sum_{i=1}^n \left|E(a_{ni}X_{ni})\right|\Big ).
\end{split}
\end{equation}
By the second half of \eqref{add007} and Proposition \ref{lemma_bound01}, we have
\begin{equation}\label{sv007}
\begin{split}
\sum_{n\ge A^\alpha} n^{-1}P\Big(\max_{1\le k\le n}
|X_k|>b_n\Big)&\le \sum_{n\ge A^\alpha} n^{-1}\sum_{k=1}^{n}P
\Big(|X_k|>b_n\Big)\\
&= \sum_{n\ge A^\alpha} P(|X|>b_n)<\infty.
\end{split}
\end{equation}
For $n\ge 1$, by the Cauchy-Schwarz inequality and \eqref{sv001}, 
\begin{equation}\label{sv009}
\begin{split}
\Big(\sum_{i=1}^{n}|a_{ni}|\Big)^2 &\le
n\Big(\sum_{i=1}^{n}a_{ni}^2\Big) \le Cn^2.
\end{split}
\end{equation}
For $n\ge A^\alpha$, the first half of \eqref{add007} and \eqref{sv009} imply that
\begin{equation}\label{sv37}
\begin{split}
\dfrac{\sum_{i=1}^{n}|E(a_{ni}X_{ni})|}{b_n}& \le \dfrac{\sum_{i=1}^{n}|a_{ni}|\left(\left|EX_{i}I(|X_i|\le b_n)\right|+b_nP(|X_i|>b_n)\right)}{b_n}\\
& \le \dfrac{Cn\left(\big|E(XI(|X|\le b_n))\big|+b_nP(|X|>b_n)\right)}{b_n}\\
& =\dfrac{Cn\left(\big|E(XI(|X|> b_n))\big|+b_nP(|X|>b_n)\right)}{b_n}\\
& \le \dfrac{Cn E|X|I(|X|> b_n)}{b_n}.
\end{split}
\end{equation}
For $n$ large enough
and for $\omega \in (|X|>b_n)$, we have
\begin{equation}\label{sv39}
\begin{split}
\dfrac{n}{b_n}&= \dfrac{n^{(\alpha-1)/\alpha}\tilde{L}^{\alpha-1}(n^{1/\alpha})}{\tilde{L}^{\alpha}(n^{1/\alpha})}\\
&= \dfrac{\left(n^{1/\alpha}\tilde{L}(n^{1/\alpha})\right)^{\alpha-1}L^\alpha \left(n^{1/\alpha}\tilde{L}(n^{1/\alpha})\right)}{\tilde{L}^{\alpha}(n^{1/\alpha})L^\alpha\left(n^{1/\alpha}\tilde{L}(n^{1/\alpha})\right)}\\
&\le C b_{n}^{\alpha-1}L^\alpha(b_n) \le  C|X(\omega)|^{\alpha-1} L^\alpha(|X(\omega)|),
\end{split}
\end{equation}
where we have applied \eqref{BGT1513} in the first inequality and the monotonicity of $x^{\alpha-1}L^\alpha(x)$ in the second inequality.
Combining \eqref{sv37}, \eqref{sv39}, the second half of \eqref{add007} and using Lemma \ref{lemma01sv} (ii), we have 
\begin{equation}\label{sv41}
\begin{split}
\dfrac{\sum_{i=1}^{n}|E(a_{ni}X_{ni})|}{b_n}&
\le CE\left(|X|^\alpha L^\alpha(|X|)I\left(|X|> b_n\right)\right)\\
&\le CE\left(|X|^\alpha L^\alpha(|X|+A)I\left(|X|> b_n\right)\right)\\
&\to 0 \text{ as } n\to \infty.
\end{split}
\end{equation}
From
\eqref{sv005}, \eqref{sv007} and \eqref{sv41}, to obtain \eqref{sv003}, it
remains to show that
\begin{equation}\label{sv013}
\sum_{n\ge A^\alpha}
n^{-1}P\Big(\max_{1\le j\le n} |S_{nj}|>b_n\varepsilon /2 \Big
)<\infty.
\end{equation}
Set $b_{A^\alpha-1}=0$.
For $B$ large enough, we have
\begin{equation}\label{sv012}
\begin{split}
&\sum_{n\ge A^\alpha} \dfrac{1}{n}P\Big(\max_{1\le k\le n}
|S_{nk}|>b_n\varepsilon /2 \Big )
  \le \sum_{n\ge A^\alpha}\dfrac{4 }{\varepsilon ^2 nb_{n}^2}
E\Big(\max_{1\le j\le n}|S_{nj}|\Big)^2\\
 & \le \sum_{n\ge A^\alpha}\dfrac{4 }{\varepsilon ^2 nb_{n}^2}
\sum_{i=1}^nE\Big(a_{ni}X_{ni}-E(a_{ni}X_{ni})\Big)^2\\
 & \le \sum_{n\ge A^\alpha}\dfrac{4 \left(\sum_{i=1}^na_{ni}^2\right)\left(EX^2I(|X|\le b_n)+b_{n}^2 P(|X|>b_n)\right)}{\varepsilon ^2 n b_{n}^{2}}
\\
 & \le C\sum_{n\ge A^\alpha}\left(\dfrac{E(X^2I(|X|\le b_n))}{b_{n}^{2}}+P(|X|>b_n)\right)\\
& \le C+ C\sum_{n\ge A^\alpha}\dfrac{1}{n^{2/\alpha}\tilde{L}^2(n^{1/\alpha})}\sum_{A^\alpha\le i\le n} E\left( X^2I(b_{i-1}<|X|\le b_{i})\right)\\
& = C+C\sum_{i\ge B}\left(\sum_{n\ge i}\dfrac{1}{n^{2/\alpha}\tilde{L}^2(n^{1/\alpha})}\right) E\left( X^2I(b_{i-1}<|X|\le b_{i})\right)\\
& \le C+C\sum_{i\ge B} i^{(\alpha-2)/\alpha}\tilde{L}^{-2}(i^{1/\alpha}) E\left( X^2I(b_{i-1}<|X|\le b_{i})\right),
\end{split}
\end{equation}
where we have used Chebyshev's inequality in the first inequality, the Kolmogorov maximal inequality (see Shao \cite[Theorem 2]{Shao00})
in the second inequality, \eqref{sv001} in the fourth inequality, \propref{lemma_bound01} and the second half of \eqref{add007} in the fifth inequality
and Lemma \ref{sv51} in the last inequality.
For $\omega \in (b_{i-1}<|X|\le b_{i})$, we have
\begin{equation}\label{sv014}
\begin{split}
i^{(\alpha-2)/\alpha}\tilde{L}^{-2}(i^{1/\alpha})&=\dfrac{i^{(\alpha-2)/\alpha}\tilde{L}^{\alpha-2}(i^{1/\alpha})L^\alpha\left(i^{1/\alpha}\tilde{L}(i^{1/\alpha})\right)}
{\tilde{L}^{\alpha}(i^{1/\alpha})L^\alpha\left(i^{1/\alpha}\tilde{L}(i^{1/\alpha})\right)}\\
&\le C\left(i^{1/\alpha}\tilde{L}(i^{1/\alpha})\right)^{\alpha-2}L^\alpha\left(i^{1/\alpha}\tilde{L}(i^{1/\alpha})\right)\\
&= Cb_{i}^{\alpha-2}L^\alpha\left(b_i\right)\le C|X(\omega)|^{\alpha-2} L^\alpha\left(|X(\omega)|\right),
\end{split}
\end{equation}
where we have applied \eqref{BGT1513} in the first inequality and the monotonicity of $x^{2-\alpha}L^\alpha(x)$ in the second inequality.
Combining \eqref{sv012}, \eqref{sv014}, the second half of \eqref{add007} and using Lemma \ref{lemma01sv} (ii), we have
\begin{equation}\label{sv016}
\begin{split}
\sum_{n\ge A^\alpha} \dfrac{1}{n}P\Big(\max_{1\le k\le n}
|S_{nk}|>b_n\varepsilon /2 \Big )
& \le C+ C E\left(|X|^\alpha L^\alpha(|X|+A)\right)<\infty,
\end{split}
\end{equation}
thereby proving \eqref{sv013}.

The implication [(ii) $\Rightarrow$ (iii)] is immediate by letting $a_{ni}\equiv 1$. Now, we assume that (iii) holds. Since 
\[b_n=n^{1/\alpha}\tilde{L}(n^{1/\alpha}) \uparrow \infty \text{ and }\dfrac{b_{2n}}{b_{n}}=\dfrac{2^{1/\alpha}\tilde{L}((2n)^{1/\alpha})}{\tilde{L}(n^{1/\alpha})}\le C,\]
it follows from the proof of \cite[Lemma 2.4]{Sung14} that (see (2.1) in \cite{Sung14})
\begin{equation}\label{S01}
\lim_{k\to\infty}\dfrac{\max_{1\le i\le 2^{k+1}}|\sum_{j=1}^i X_j|}{b_{2^k}}=0 \text{ a.s.}
\end{equation}
For $2^k\le n< 2^{k+1}$,
\begin{equation}\label{S02}
\dfrac{\max_{1\le i\le n}|\sum_{j=1}^i X_j|}{b_{n}}\le \dfrac{\max_{1\le i\le 2^{k+1}}|\sum_{j=1}^i X_j|}{b_{2^k}}.
\end{equation}
Combining \eqref{S01} and \eqref{S02}, we obtain \eqref{add014}.

Finally, we prove the implication [(iv)$\Rightarrow$(i)]. It follows from \eqref{add014}
that
\begin{equation}\label{add050}
\lim_{n\to\infty}\dfrac{\max_{1\le k\le n}\left| X_k\right|}{b_n}=0\ \text{a.s.}
\end{equation}
Since $\{X,X_n,n\ge 1\}$ is a sequence of negatively associated random variables, 
$\{(X_n>b_n),n\ge 1\}$ are pairwise negatively correlated events, and so are $\{(X_n<-b_n),n\ge 1\}$.
By the generalized Borel-Cantelli lemma (see, e.g., \cite{Petrov}), it follows from 
\eqref{add050} that
\begin{equation}\label{add052}
\sum_{n\ge A^\alpha}P(|X|>b_n)=\sum_{n\ge A^\alpha}P(|X_n|>b_n)<\infty
\end{equation}
which, by \propref{lemma_bound01}, is equivalent to
\begin{equation}\label{add016}
E\left(|X|^\alpha L^\alpha(|X|+A)\right)<\infty.
\end{equation}
From \eqref{add016}, we have $E|X|<\infty$. Since
$|X-EX| \le |X|+E|X|$ and $L(\cdot)$ 
is differentiable slowly varying, 
 \eqref{add016} further implies\begin{equation}\label{add016_b}
E\left(|X-EX|^\alpha L^\alpha(|X-EX|+A)\right)<\infty.
\end{equation}
From \eqref{add016_b} and the proof of
 ((i)$\Rightarrow$(iv)), we have
\begin{equation}\label{add020}
\begin{split}
\lim_{n\to\infty}\left(\dfrac{
\sum_{i=1}^{n} X_i}{b_n}-\dfrac{n^{(\alpha-1)/\alpha}EX}{\tilde{L}(n^{1/\alpha})}\right)&=\lim_{n\to\infty}\dfrac{
\sum_{i=1}^{n}(X_i-EX_i)}{b_n}\\
&=0\ \text{a.s.}
\end{split}
\end{equation}
For the case where $1<\alpha<2$, we have from Remark \ref{R11} that
$n^{(\alpha-1)/\alpha}/\tilde{L}(n^{1/\alpha})\rightarrow\infty$ as $n\to\infty$. For the case where $\alpha=1$, we have
from \eqref{BGT1513} that $n^{(\alpha-1)/\alpha}/\tilde{L}(n^{1/\alpha})=1/\tilde{L}(n)
\sim L(n\tilde{L}(n))\ge 1$. 
It thus follows from
\eqref{add014} and \eqref{add020} that $E(X)=0$, i.e., the first half of \eqref{add007} holds.
The proof is completed.
\end{proof}

By letting $L(x)\equiv 1$, \thmref{Theorem3.1} generalizes a seminal result of
Baum and Katz \cite{BaKa} on complete convergence
for sums of independent random variables to weighted sums of negatively associated random variables.
Recently, Miao et al. \cite{MMX}
proved the following proposition.

\begin{prop}[\cite{MMX}, Theorem 2.1]\label{prop:MMX}
Let $0<\alpha<2$ and let $\{X,X_n,n\ge 1\}$ be a strictly stationary negatively associated sequence
with $E|X|^\alpha\log^{-\beta}(|X|+2)<\infty$ for some $\beta\ge 0$. In the case where $1< \alpha<2$, 
assume further that
$EX=0$. Then for any $\delta\ge (1+\alpha^2-\alpha)/\alpha$, we have
\begin{equation}\label{MMX01}
\lim_{n\to\infty}\dfrac{
\sum_{i=1}^{n}X_i}{n^{1/\alpha}(\log n)^{\beta(1-\alpha+\delta)}}=0\ \text{ a.s.}
\end{equation}
\end{prop}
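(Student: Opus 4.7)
The natural plan is to deduce \propref{prop:MMX} from \thmref{Theorem3.1} by an appropriate choice of slowly varying function. For the main range $1<\alpha<2$, I would set $L(x)=\log^{-\beta/\alpha}(x)$, so that (up to the harmless shift $+A$ vs.\ $+2$) the moment hypothesis $E|X|^{\alpha}\log^{-\beta}(|X|+2)<\infty$ is exactly condition $(\mathrm{i})$ of \thmref{Theorem3.1}, the mean-zero part being furnished by hypothesis.

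To identify the norming $b_n=n^{1/\alpha}\tilde{L}(n^{1/\alpha})$, I would invoke the Bojani\'{c}--Seneta criterion leading to \eqref{BGT2.3.4}: one directly checks that $(L(\lambda_0 x)/L(x)-1)\log L(x)=O(\log\log x/\log x)\to 0$, so that $\tilde{L}(x)\sim 1/L(x)=\log^{\beta/\alpha}(x)$ and hence $b_n\sim C n^{1/\alpha}\log^{\beta/\alpha}(n)$. A short algebraic computation shows that $1-\alpha+\delta\ge 1/\alpha$ precisely when $\delta\ge(1+\alpha^2-\alpha)/\alpha$, so the Miao--Mu--Xu norming dominates $b_n$ up to a multiplicative constant. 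The implication $(\mathrm{i})\Rightarrow(\mathrm{iv})$ of \thmref{Theorem3.1} then immediately yields \eqref{MMX01}.

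The main obstacle is that \thmref{Theorem3.1} only covers $1\le\alpha<2$, whereas the proposition permits $0<\alpha<2$. For $0<\alpha<1$ the argument above can be rerun essentially unchanged: the truncation step and the Kolmogorov maximal inequality for negatively associated sequences still apply, while the centering contribution $\sum|E(a_{ni}X_{ni})|/b_n$ now vanishes automatically because $n/b_n=n^{1-1/\alpha}\tilde{L}^{-1}(n^{1/\alpha})\to 0$, explaining why no mean-zero assumption appears in the statement. For $\alpha=1$ with $\beta>0$ the natural choice $L(x)=\log^{-\beta}(x)$ violates the monotonicity and $L\ge 1$ hypotheses of \thmref{Theorem3.1}, so the theorem cannot be applied verbatim; however, the critical threshold $(1+\alpha^2-\alpha)/\alpha$ reduces to $1$ in this case, and the extra factor $\log^{\beta(\delta-1)}(n)$ in the denominator provides slack to carry out a direct truncation at $X'_i=X_i I(|X_i|\le b_i)$ combined with \propref{lemma_bound01} and the Kolmogorov maximal inequality. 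Verifying that this direct argument closes at the borderline case $\delta=1$, where the slack disappears and only the leading $\log^{\beta}(n)$ factor remains, is the most delicate point of the plan and, in my view, the principal technical obstacle.
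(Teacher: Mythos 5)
Your handling of the main range $1<\alpha<2$ (and of $\alpha=1$, $\beta=0$) coincides with the paper's: the paper likewise reduces to the extremal case $\delta=(1+\alpha^2-\alpha)/\alpha$, takes $L(x)=\log^{-\beta/\alpha}(x+2)$ so that $\tilde{L}\sim 1/L$ and $b_n\asymp n^{1/\alpha}\log^{\beta/\alpha}(n)$, and reads off \eqref{MMX01} from the implication (i) $\Rightarrow$ (iv) of \thmref{Theorem3.1}. (Bear in mind that the proposition itself is imported from \cite{MMX}; the paper does not give a self-contained proof but only observes that it is subsumed by its own theorems in certain parameter ranges.)

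The divergences are at $\alpha\le 1$, and there your write-up has genuine gaps. For $\alpha=1$, $\beta>0$ you rightly note that \thmref{Theorem3.1} is inapplicable, but you then leave the borderline case $\delta=1$ (exponent exactly $\beta$, no slack) as an unresolved ``principal technical obstacle.'' That is precisely the case the paper closes by a different mechanism: \thmref{Theorem3.2} with $L(x)=\log^{-\beta}(x)$, hence $\tilde{L}(x)=\log^{\beta}(x)\uparrow\infty$, yields $E|X|\log^{-\beta}(|X|+2)<\infty$ if and only if $\sum_{i=1}^{n}X_i/(n\log^{\beta}n)\to 0$ a.s., and does so for merely pairwise negatively dependent variables; it rests on the Shen--Zhang--Volodin result \cite{SZV} combined with \propref{lemma_bound01} rather than on a maximal-inequality computation, so no ``slack'' is needed. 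You should invoke that theorem instead of attempting a direct truncation. For $0<\alpha<1$ the paper does not rederive the statement at all, and your sketch has a soft spot: the assertion that the centering term dies ``automatically because $n/b_n\to 0$'' is insufficient when $E|X|=\infty$. One must show $nE\left(|X|I(|X|\le b_n)\right)/b_n\to 0$, and the crude bound via the monotonicity of $x^{1-\alpha}L^{-\alpha}(x)$ together with \eqref{BGT1513} only gives $O(1)$; a splitting at a fixed level $M$ (bounded part handled by $n/b_n\to0$, tail part made small by the moment condition) is required. Both defects are repairable, but as written the argument is incomplete for $\alpha\le 1$.
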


We observe that one only needs to verify \eqref{MMX01} for the case where $\delta=(1+\alpha^2-\alpha)/\alpha$. 
In this case, \eqref{MMX01} becomes
\begin{equation}\label{MMX02}
\lim_{n\to\infty}\dfrac{
\sum_{i=1}^{n}X_i}{n^{1/\alpha}(\log n)^{\beta/\alpha}}=0\ \text{ a.s.}
\end{equation}
For the case where (i) $1<\alpha<2,\beta\ge 0$ or (ii) $\alpha=1,\beta=0$, by letting $L(x)\equiv \log^{-\beta/\alpha}(x+2)$, 
we see that \eqref{add014} reduces to \eqref{MMX02}. Therefore,  Proposition \ref{prop:MMX}
is a special case of \thmref{Theorem3.1}. For the case where $\alpha=1$ and $\beta>0$, we will show in the next section (Sect. \ref{Pair}) that
Proposition \ref{prop:MMX} holds under a weaker condition that $\{X,X_n,n\ge 1\}$ are pairwise negatively dependent.

Now, we consider another special case where $1<\alpha<2$, $\gamma>0$ and
$L(x)=\log^{-1/\gamma}(x),\ x\ge 2$.
Then $$b_n=n^{1/\alpha}L^{-1}(n^{1/\alpha})=\left(\dfrac{1}{\alpha}\right)^{1/\gamma}n^{1/\alpha}\log^{1/\gamma}(n),\ n\ge 2,$$ 
and we have the following corollary.
This result comes close to a solution to the open question raised by
Chen and Sung \cite{ChenSung14} which we have mentioned in  Introduction.

\begin{cor}\label{cor01}
Let $1< \alpha<2$,  $\gamma>0$ and 
$\{X,X_n, \, n \geq 1\}$ be a sequence of negatively associated and identically distributed random variables.
Then the following statements are equivalent.
\begin{description}

\item(i) The random variable $X$ satisfies
\begin{equation*}
E(X)=0\ \text{ and }\ E\left(|X|^\alpha/\log^{\alpha/\gamma}(|X|+2)\right)<\infty.
\end{equation*}
\item(ii) For every array of constants $\{a_{ni},n\ge 1,1\le i\le n\}$
satisfying \eqref{sv001}, we have \eqref{ChenSung03}.
\item(iii) The strong law of large numbers
\begin{equation*}
\begin{split}
\lim_{n\to\infty}\dfrac{
\max_{1\le k\le n}\left|\sum_{i=1}^{k}X_i\right|}{n^{1/\alpha}\log^{1/\gamma}n}=0\ \text{ a.s.}
\end{split}
\end{equation*}
holds.
\end{description}
\end{cor}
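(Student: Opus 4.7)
The plan is to apply \thmref{Theorem3.1} with the specific choice $L(x) = \log^{-1/\gamma}(x)$, $x \geq 2$. I would first verify that this $L$ is slowly varying and determine its de Bruijn conjugate explicitly, then identify the normalizing sequence $b_n$ and rewrite the moment condition, and finally absorb constants into the arbitrary $\varepsilon$ to reduce \eqref{sv003} to \eqref{ChenSung03}.

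First, $L(x) = \log^{-1/\gamma}(x)$ is plainly slowly varying on $[2,\infty)$. A short calculation shows that for any $\lambda_0 > 1$,
\begin{equation*}
\left(\dfrac{L(\lambda_0 x)}{L(x)} - 1\right)\log(L(x)) \sim \dfrac{\log(\lambda_0)\log\log(x)}{\gamma^2\log(x)} \to 0 \ \text{ as } x \to \infty,
\end{equation*}
so the Bojani\'{c}--Seneta condition stated immediately before \eqref{BGT2.3.4} holds. Consequently, we may take (up to asymptotic equivalence) $\tilde{L}(x) = 1/L(x) = \log^{1/\gamma}(x)$.

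With this choice of $\tilde{L}$, the normalizing sequence in \thmref{Theorem3.1} becomes
\begin{equation*}
b_n = n^{1/\alpha}\tilde{L}(n^{1/\alpha}) = n^{1/\alpha}\log^{1/\gamma}(n^{1/\alpha}) = \alpha^{-1/\gamma}\, n^{1/\alpha}\log^{1/\gamma}(n),
\end{equation*}
and the moment condition $E(|X|^\alpha L^\alpha(|X|+2)) < \infty$ appearing in \thmref{Theorem3.1}(i) reduces to $E(|X|^\alpha/\log^{\alpha/\gamma}(|X|+2)) < \infty$, which is precisely item (i) of the corollary. Since the positive constant $\alpha^{-1/\gamma}$ can be absorbed into the arbitrary $\varepsilon > 0$, the complete convergence \eqref{sv003} with this $b_n$ is equivalent to \eqref{ChenSung03}, and the strong law \eqref{add014} reduces to item (iii) of the corollary. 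Thus the equivalence of (i), (ii), and (iii) follows from the equivalence of (i), (ii), and (iv) in \thmref{Theorem3.1}. I do not anticipate any substantive obstacle: the only points requiring care are the verification of the Bojani\'{c}--Seneta condition (needed to obtain the explicit form of $\tilde{L}$) and the harmless absorption of $\alpha^{-1/\gamma}$ into $\varepsilon$.
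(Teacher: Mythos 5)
Your proposal is correct and follows essentially the same route as the paper: the authors likewise specialize \thmref{Theorem3.1} to $L(x)=\log^{-1/\gamma}(x)$, invoke the Bojani\'{c}--Seneta criterion from Section \ref{Pre} to identify $\tilde{L}(x)=\log^{1/\gamma}(x)$, and compute $b_n=(1/\alpha)^{1/\gamma}n^{1/\alpha}\log^{1/\gamma}(n)$, with the constant absorbed into $\varepsilon$. Your explicit verification of the Bojani\'{c}--Seneta condition is a detail the paper leaves implicit but is carried out correctly.
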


\begin{rem} {\rm
When $\alpha\le 2$, by H\"{o}lder's inequality, our condition \eqref{sv001} implies \eqref{ChenSung05}.
From \cororef{cor01}, we see that by slightly extending \eqref{ChenSung05}, 
we obtain optimal moment condition for \eqref{ChenSung03}.
}
\end{rem}

In the following example, we show that the moment condition provided
by Chen and Sung \cite[Corollary 2.2]{ChenSung14} is violated, but \cororef{cor01} can still be applied.

\begin{exm}\label{exm01}{\rm Let $1< \alpha< 2$, $\gamma>0$ and 
$\{X,X_n,n\ge 1\}$ be a sequence of negatively associated and identically distributed random variables
with the common density function
\[ f(x)= \dfrac{b }{|x|^{\alpha + 1}\log^{1-\alpha/\gamma}(|x|+2)\log^2(\log(|x|+2))}I(|x| > 1),\]
where $b$ is the normalization constant. 
Then 
\[EX=0,\ E\left(|X|^\alpha/\log^{\alpha/\gamma}(|X|+2)\right)<\infty.\]
Therefore, by applying \cororef{cor01} with $a_{ni}\equiv 1$, we obtain 
\[\sum_{n=1}^\infty 
 n^{-1} P\left(\max_{1\le k\le n}\left|
\sum_{i=1}^{k}X_i\right|>\varepsilon  n^{1/\alpha}\log^{1/\gamma}(n)\right)<\infty
\ \text{ for all } \ \varepsilon >0,\]
and
\begin{equation*}
\begin{split}
\lim_{n\to\infty}\dfrac{
\max_{1\le k\le n}\left|\sum_{i=1}^{k}X_i\right|}{n^{1/\alpha}\log^{1/\gamma}n}=0\ \text{ a.s.}
\end{split}
\end{equation*}
In this example, we cannot apply Corollary 2.2 in Chen and Sung \cite{ChenSung14} since
\begin{equation}\label{eq:add10}
E\left(|X|^\alpha\log^{1-\alpha/\gamma}(|X|+2)\right)=\infty.
\end{equation}
}	
	\end{exm}
	
\section{Strong Law of Large Numbers for Sequences of Pairwise Negatively Dependent and Identically Distributed Random Variables}\label{Pair}

For a sequence of i.i.d. random variables $\{X,X_n,n\ge 1\}$,
the classical Hartman–Wintner law of the iterated logarithm states that $E(X)=0$ and
$E(X^2)<\infty$ are necessary and sufficient conditions for
the law of the iterated logarithm to hold.

By letting $L(x)\equiv 1$ in \thmref{Theorem3.1}, we see that the Marcinkiewicz--Zygmund strong law of large numbers holds
for sequences of negatively associated and identically distributed random variables
under optimal condition $E|X|^\alpha<\infty$. 
However, in \thmref{Theorem3.1}, for the case where $\alpha=1$, we require $L(x)\ge 1$ for $x\ge A$.
The reason behind this is because we need $E|X|<\infty$ in the proof. 
The aim of this section is
to establish the strong law of large numbers for the case
where $E|X|=\infty$. It turns out that a similar strong law of large numbers
still holds even for pairwise negatively dependent random variables. 
On the law of the iterated logarithm, this line of research
was initiated by Feller \cite{Feller68a} and completely developed by
Kuelbs and Zinn \cite{KZinn83}, Einmahl \cite{Einmahl93}, 
Einmahl and Li \cite{EL05,EL08} where
the authors proved general laws of the iterated logarithm for
sequences of i.i.d. random variables
with $E(X^2)=\infty$. The
normalizing sequences in
laws of the iterated logarithm in 
Einmahl and Li \cite{EL05,EL08} are also of
the form $\sqrt{nL(n)}$, where $L(n)$ is a
slowly varying increasing function.

It is worth noting that
for pairwise i.i.d. random variables, 
the Marcinkiewic-Zygmund strong law of large numbers holds under
optimal moment condition $E|X|^\alpha<\infty$, $1\le \alpha<2$ (see Etemadi \cite{Etemadi81} for the case where $\alpha=1$ and
Rio \cite{Rio95b} for the case where $1<\alpha<2$). 
On the case where the random variables are pairwise independent, but not identically distributed,  Cs\"{o}rg\H{o}
et al. \cite{CTT} proved that the Kolmogorov condition alone does not ensure the strong law of large numbers.
Bose and Chandra \cite{BoseChandra}, and Chandra and Goswami \cite{ChandraGoswami03}
generalized the Marcinkiewicz--Zygmund-type law of large numbers for pairwise independent case under 
the so-called Ces\`{a}ro uniform integrability condition.

For pairwise negatively dependent random variables, Shen et al. \cite{SZV}
established a strong law of large numbers for pairwise negatively dependent and identically distributed
 random variables under a very general condition.
Precisely, Shen et al. \cite[Theorems 3 and 5]{SZV} proved that if $\{b_n,n\ge 1\}$ is a sequence of 
positive constants with $b_n/n\uparrow \infty$
and if $\{X,X_n,n\ge 1\}$ is a sequence of pairwise negatively dependent and identically distributed random variables, then 
$\sum_{n=1}^\infty P(|X|>b_n)<\infty$ if and only if $\sum_{n=1}^\infty n^{-1} P\left(\max_{1\le k\le n}|\sum_{i=1}^kX_i|>b_n\varepsilon\right)<\infty$
for all $\varepsilon>0$. 
By combining this result of Shen et al. \cite{SZV} with \propref{lemma_bound01}, we have the following theorem.
 
\begin{thm}\label{Theorem3.2}
Let $\{X,X_n, \, n \geq 1\}$ be a sequence of pairwise negatively dependent and
identically distributed random variables, and 
let $L(\cdot)$ be a slowly varying function defined on $[A,\infty)$ for some $A>0$ with
$\tilde{L}(x)\uparrow \infty$ as $x\to\infty$. 
Then the 
following statements are equivalent.
\begin{description}

\item(i) The random variable $X$ satisfies
\begin{equation}\label{add007_cor_pair}
E\left(|X| L(|X|+A)\right)<\infty.
\end{equation}
\item(ii) 
\begin{equation}\label{sv003_cor_pair}
\sum_{n\ge A}
n^{-1}P\left(\max_{1\le k\le n}\left|
\sum_{i=1}^{k}X_i\right|>\varepsilon  n\tilde{L}(n)\right)<\infty \text{ for all }\varepsilon >0.
\end{equation}
\item(iii) The following strong law of large numbers holds:
\begin{equation}\label{add014_pair}
\begin{split}
\lim_{n\to\infty}\dfrac{
\sum_{i=1}^{n}|X_i|}{n\tilde{L}(n)}=0\ \text{ a.s.}
\end{split}
\end{equation}
\end{description}
\end{thm}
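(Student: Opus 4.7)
My plan is to reduce \thmref{Theorem3.2} directly to an application of \propref{lemma_bound01} with $\alpha=1$ followed by the Shen et al.\ \cite{SZV} equivalence quoted just above the theorem statement. The proof is structurally short; the bulk of the work is absorbed into those two prior results, and what remains is to verify that their hypotheses are in force in the present setting.

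First I would set up the monotonicity conditions required by \propref{lemma_bound01}. Using \lemref{lem:GS} applied to both $L$ and to its de Bruijn conjugate $\tilde L$ (which is itself slowly varying by \cite[Theorem 1.5.13]{BGT}), I may replace each of them, without loss of generality, by an asymptotically equivalent differentiable slowly varying function satisfying \eqref{sv002}. \lemref{lemma01sv}(i) with $p=1$ then guarantees that $xL(x)$ and $x\tilde L(x)$ are eventually increasing; enlarging $A$ if necessary, I may assume both are increasing on $[A,\infty)$. Consequently the monotonicity assumptions of \propref{lemma_bound01} with $\alpha=1$, namely that $x^\alpha L^\alpha(x)=xL(x)$ and $x^{1/\alpha}\tilde L(x^{1/\alpha})=x\tilde L(x)$ be increasing, are in place, and the proposition yields
\[
E(|X|L(|X|+A))<\infty \iff \sum_{n\ge A}P(|X|>b_n)<\infty, \quad \text{where } b_n = n\tilde L(n).
\]
Thus statement (i) is equivalent to summability of the tail series along the sequence $(b_n)$.

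Next I would invoke the result of Shen et al.\ \cite{SZV}. Since $b_n/n=\tilde L(n)\uparrow\infty$ by hypothesis and $\{X,X_n,n\ge 1\}$ is pairwise negatively dependent and identically distributed, Theorems 3 and 5 of \cite{SZV} together give
\[
\sum_{n\ge A}P(|X|>b_n)<\infty \iff \text{(ii)} \iff \text{(iii)}.
\]
Chaining this with the previous equivalence closes the proof. The main potential obstacle lies in the first step, because \propref{lemma_bound01} imposes monotonicity hypotheses on $L$ \emph{and} on $\tilde L$ simultaneously, and I cannot freely modify one without affecting the other. The way around this is that slow variation of $\tilde L$ lets \lemref{lem:GS} be applied to it on the same footing as to $L$, after which \lemref{lemma01sv}(i) delivers the needed monotonicity for each. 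Once this preliminary cleanup is complete, the rest of the argument is a direct substitution into the two previously established results.
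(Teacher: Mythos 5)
Your proposal is correct and follows essentially the same route as the paper: the paper likewise obtains \thmref{Theorem3.2} by combining \propref{lemma_bound01} (with $\alpha=1$, so that statement (i) is equivalent to $\sum_{n\ge A}P(|X|>n\tilde{L}(n))<\infty$) with Theorems 3 and 5 of Shen et al.\ \cite{SZV}, whose hypothesis $b_n/n\uparrow\infty$ is exactly the assumption $\tilde{L}(x)\uparrow\infty$. Your additional care in justifying the monotonicity hypotheses of \propref{lemma_bound01} via \lemref{lem:GS} and \lemref{lemma01sv}(i) is a sound elaboration of a step the paper leaves implicit.
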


Martikainen \cite{Martikainen} proved that if $\{X,X_n,n\ge 1\}$ is a sequence of pairwise i.i.d. mean $0$ random variables, then 
$E|X|\log^{\gamma}(|X|+2) <\infty$ for some $\gamma>0$ 
if and only if $\lim_{n\to\infty}\dfrac{
\sum_{i=1}^{n}X_i}{n \log^{-\gamma}(n)}=0\ \text{ a.s.}$
In \thmref{Theorem3.2}, by letting $L(x)\equiv \log^{-\gamma}(x)$ for some $\gamma>0$, then 
for sequences of pairwise negatively dependent and identically distributed random variables, we have
$E|X|\log^{-\gamma}(|X|+2) <\infty$ 
if and only if $\lim_{n\to\infty}\dfrac{
\sum_{i=1}^{n}X_i}{n \log^{\gamma}(n)}=0\ \text{ a.s.}$
Therefore, a very special case of \thmref{Theorem3.2} can be considered as a counterpart of the main result in 
Martikainen \cite{Martikainen}. This special case also extends Proposition \ref{prop:MMX} (for the case where $\alpha=1,\beta>0$) to pairwise negatively dependent
random variables.

Finally, we present the following example to illustrate \thmref{Theorem3.2}.
This example concerns a random variable appearing in the St. Petersburg game.

\begin{exm} {\rm The St. Petersburg game
which is defined as follows: Tossing a
fair coin repeatedly until the head appears. If this happens at trial number $n,$
you receive $2^n$ Euro.  
The random variable $X$ behind the game has probability mass function:
\begin{equation}\label{Peter00}
P(X=2^n)=\dfrac{1}{2^n},n\ge 1.
\end{equation}
Since $E(X)=\infty$, a fair price for you
to participate in the game would be impossible. 
To set the fee as a function of the number of games, 
Feller \cite[Chapter X]{Feller68} (see also in Gut \cite{Gut04}) proved that
\begin{equation}\label{Peter01}
\lim_{n\to\infty}\dfrac{\sum_{i=1}^n X_i}{n\log n}=1 \text{ in probability},
\end{equation}
where $\{X_n,n\ge 1\}$ are independent random variables which have
the same distribution as $X$. 

By Theorem 2 of Chow and Robbins \cite{ChowRobbins}, it is impossible to have 
almost sure convergence in \eqref{Peter01}.
The natural question that comes to mind is what
would be an ``optimal'' (or ``smallest'') choice of
$\{b_n,n\ge 1\}$ in order for
\[\lim_{n\to\infty}\dfrac{\sum_{i=1}^n X_i}{b_n}=0 \text{ a.s.}\]
to hold? 
It turns out that we can
have such a strong law of large numbers even by
requiring only the random variables $\{X_n,n\ge 1\}$ are pairwise negatively dependent
and have the same distribution as $X$.
To see this, let
\[L(x)=\left((\log|x|)(\log(\log (4+|x|)))^{1+\gamma}\right)^{-1},\]
where $\gamma$ is positive, arbitrary small, but fixed, 
then $E\left(|X| L(|X|)\right)<\infty$.
By \thmref{Theorem3.2}, the Borel-Cantelli lemma and some
easy computations, we can show that
\begin{equation}\label{Peter05}
\lim_{n\to\infty}\dfrac{\sum_{i=1}^n X_i}{n(\log n)(\log(\log (4+n)))^{1+\gamma}}=0 \text{ a.s.,}
\end{equation}
and
\begin{equation}\label{Peter07}
\limsup_{n\to\infty}\dfrac{\sum_{i=1}^n X_i}{n(\log n)(\log(\log (4+n)))\log(\log(\log (4+n)))}=\infty \text{ a.s.}
\end{equation}
}
\end{exm}

\begin{rem} {\rm For the i.i.d. case, Cs\"{o}rg\H{o} and Simons \cite{CsorgoSimons} obtained \eqref{Peter05} and \eqref{Peter07} 
by applying their strong law of large numbers for trimmed sums.
}
\end{rem}

	\textbf{Acknowledgments.}
The authors are grateful to the referee for constructive, perceptive
and substantial comments and suggestions which enabled us to greatly improve the paper. 
In particular, the referee's suggestions on the asymptotic inverse of the regularly varying function $x^\alpha L(x)$ enabled us
to obtain Theorems \ref{Theorem3.1} and \ref{Theorem3.2} which are considerably more general than
those of the initial version of the paper. 

{\small
	
}
\end{document}